\newcommand {\R}{\mathbb {R}}
\newcommand {\Z}{\mathbb {Z}}
\newcommand {\cH}{\mathcal {H}}
\newcommand {\cK}{\mathcal {K}}
\newcommand {\cG}{\mathcal {G}}
\newcommand {\cT}{\mathcal {T}}
\newcommand{\Go}{\Gamma_{\!0}}
\DeclareMathOperator{\length}{length}
\DeclareMathOperator{\Sk}{Sk}
\newtheorem{theorem}{Theorem}[section]
\newtheorem{lemma}[theorem]{Lemma}
\newtheorem{proposition}[theorem]{Proposition}
\theoremstyle{definition}
\newtheorem{remark}[theorem]{Remark}
\newtheorem{definition}[theorem]{Definition}
\numberwithin{equation}{section}
\begin{document}

\title[A doubling measure on  $\R^d$ can charge a rectifiable curve]{A doubling
 measure on $\R^d$ can charge\\a rectifiable curve}

\author{John Garnett}\address{John Garnett, Department of Mathematics, University of California, Los Angeles}
\author{Rowan Killip}\address{Rowan Killip, Department of Mathematics, University of California, Los Angeles}
\author{Raanan Schul}\address{Raanan Schul, Department of Mathematics, State University of New York, Stony Brook}

\begin{abstract}
For $d\geq 2$, we construct a doubling measure $\nu$ on $\R^d$ and a rectifiable curve $\Gamma$
such that $\nu(\Gamma)>0$.
\end{abstract}

\maketitle

\section{Introduction}

A Borel measure $\nu$ on $\R^d$ is said to be \emph{doubling} if there is a constant $C_\nu<\infty$ such that
for any $x\in \R^d$ and $0<r<\infty$ we have
\begin{equation}
\nu(B(x,2r))\leq
C_\nu(B(x,r))\,
\end{equation}
where $B(x,r)$ is the ball $\{y: |y-x| < r\}.$ 
A \emph{rectifiable curve} is a continuous map $\gamma:[0,1]\to\R^d$ with
$$
    \length(\gamma) := \sup_{0\leq t_0\leq\cdots\leq t_n\leq 1} \sum_{j=1}^n |\gamma(t_j)-\gamma(t_{j-1})| < \infty.
$$
By reparametrization, one may assume that $\gamma$ is Lipschitz with constant equal to $\length(\gamma)$. We will also make use
of the following simple (and well-known) criterion: a compact set $\Gamma$ is the image of a rectifiable curve if and
only if it is connected and $\cH^1(\Gamma)<\infty$.  Indeed, one may choose $\gamma$ so that $\length(\gamma) \leq C
\cH^1(\Gamma)$; see, for example, \cite{DS,Falconer}.  Here and below, $\cH^1$ denotes the one-dimensional Hausdorff
measure.

The purpose of this note is to prove

\begin{theorem}\label{T:1}
Let $d\geq 2$.
There exists a doubling measure $\nu$ on $\R^d$ and a rectifiable curve $\Gamma$ such that $\nu(\Gamma)>0$.
\end{theorem}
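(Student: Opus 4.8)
The plan is to construct $\nu$ and $\Gamma$ simultaneously and directly in $\R^d$ by an iterative scheme, realizing $\Gamma$ as the decreasing intersection of a nested sequence of polygonal tubes and $\nu$ as the weak-$*$ limit of mass distributions supported on a tower of dyadic-type cubes. At the $j$-th stage one has a connected polygonal curve $\Gamma_j$ of finite length, a thin closed tube $T_j \supset \Gamma_j$ with $T_{j+1}\subset T_j$ and $\bigcap_j T_j = \Gamma$, and a probability measure $\nu_j$ distributed over the level-$j$ cubes. I would arrange three things: (i) $\length(\Gamma_j)\le\prod_{i<j}(1+\eta_i)$ with $\sum\eta_i<\infty$, so that $\length(\Gamma)<\infty$ and, $\Gamma$ being connected and compact, it is a rectifiable curve by the criterion quoted above; (ii) $\nu_j(T_j)\ge\prod_{i<j}(1-\epsilon_i)$ with $\sum\epsilon_i<\infty$, so that in the limit $\nu(\Gamma)\ge\prod_i(1-\epsilon_i)>0$; and (iii) $\nu$ is doubling on all of $\R^d$.

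The difficulty is that (ii) and (iii) pull against one another: a doubling measure assigns comparable mass to any two adjacent cubes of equal size, so at any fixed scale the mass cannot have concentrated onto the one-dimensional set $\Gamma$. The resolution, which is the crux, is to \emph{decouple} the geometric refinement of the curve from the concentration of the measure. Refinements of the curve — replacing each straight segment of $T_j$ by a finer zig-zag inside it, which produces a genuinely fractal (hence $\cH^1$-singular) limit set that can carry the mass — are performed only at a sparse sequence of scales, each contributing a summable factor $(1+\eta_j)$ to the length. Between two consecutive refinement scales I insert a long block of scales at which the geometry is frozen and the mass in each cube is nudged, by a factor close to $1$, toward the cubes meeting the current tube $T_j$. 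No single step moves much mass, so every \emph{single-scale} neighbor ratio stays within a fixed band and doubling is preserved; yet the cumulative effect of the whole block traps a definite proportion $(1-\epsilon_j)$ of the mass inside $T_j$. Thus concentration onto $\Gamma$ is a purely \emph{deep-scale} phenomenon, invisible at any individual scale, which is exactly what reconciles (ii) with (iii).

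To make $\nu$ doubling I would use the standard cube criterion: if on the tower of cubes the mass of every cube is comparable, with a constant independent of the generation, both to the mass of each of its children and to the mass of each of its same-generation neighbors, then the limit measure is doubling on $\R^d$. The main technical obstacle is to choose the per-step weights so that these comparisons hold with a \emph{single uniform constant} even though infinitely many concentration steps are applied: each step perturbs the weights, and one must keep these perturbations from accumulating to unbounded neighbor ratios. I would do this by confining the weight attached to each cube to a fixed interval $[A^{-1},A]$ times the ambient value, choosing $\epsilon_j\to0$ fast and the number of scales in the $j$-th block large enough that the block of nudges is spread thinly, and resetting the weights toward equilibrium at the start of each new block. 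Showing that this bookkeeping closes consistently is where essentially all of the work lies.

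Finally, rectifiability of $\Gamma$ follows from (i) together with the connectedness of each $\Gamma_j$ and a compactness/limit argument, while $\nu(\Gamma)>0$ follows from (ii) and the fact that $\Gamma=\bigcap_j T_j$ with $\nu(T_j)\to\nu(\Gamma)$ by downward continuity of measures. The construction is dimension-agnostic: it is carried out in $\R^d$ for every $d\ge2$ using $d$-dimensional cubes and a curve threading through them, so no separate argument for $d\ge3$ is required. In particular one cannot simply combine a planar example with a product measure $\nu_2\otimes\mu$ on $\R^2\times\R^{d-2}$, since the curve would then lie in a $\nu$-null slice; this forces the direct construction.
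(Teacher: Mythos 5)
Your proposal is a strategy outline rather than a proof, and the missing step is precisely the content of the theorem. The gap is the verification that the limit measure is doubling with a \emph{uniform} constant. Your central mechanism --- freeze the geometry and, over a long block of $m_j$ scales, nudge mass toward the tube $T_j$ by a per-scale factor close to $1$ --- does not preserve doubling if done naively: during a frozen block the boundary of $T_j$ is a fixed hypersurface, and at \emph{every} scale of the block there are adjacent equal-size cubes straddling it. If cubes inside the tube are favored by a factor $(1+\theta)$ per scale, the mass ratio across such a straddling pair compounds to $(1+\theta)^{m_j}$, which is unbounded exactly in the regime you need, since trapping mass $1-\epsilon_j$ with $\epsilon_j$ summable forces $\theta m_j$ to be large. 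So the nudging must be spatially graded (something like a power-of-distance-to-the-tube weight profile), and one must then balance, across infinitely many blocks simultaneously, the trapped-mass fractions $\epsilon_j$, the doubling band $[A^{-1},A]$, and the zig-zag refinements adding only summable length $\eta_j$ --- while the refined curve must pass through the very cubes where a measure with bounded per-scale ratios is even \emph{capable} of concentrating. You flag this yourself (``where essentially all of the work lies''), but no weights are specified and no estimate is proved, so the argument as given does not establish the theorem.

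It is worth contrasting with the paper, which inverts your logic: rather than forcing the measure to follow an a priori curve, it fixes the measure first, as the $d$-fold product $\nu=\mu\times\cdots\times\mu$ of an explicit Riesz-product-type doubling measure $\mu$ on $\R$ (mass split $\delta,\,1-2\delta,\,\delta$ among the three triadic children), so doubling is immediate. The whole difficulty then becomes a clean large-deviation estimate (Lemma~2.2): $\mu$ puts mass at least $1-e^{-2n(k/n-2\delta)^2}$ on a set $K(n,k)$ consisting of only $e^{k[1+\log(\delta^{-1})]}$ triadic intervals of length $3^{-n}$, so for $\delta$ small the mass lives on a set of tiny Hausdorff dimension, and a connected family of cube-skeletons of controlled total length can visit all the charged cubes at every scale; the curve follows the measure, and the compounding-ratio problem above never arises because the per-scale splitting is the \emph{same} fixed $(\delta,1-2\delta,\delta)$ rule at all scales. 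Incidentally, your closing claim that the slice obstruction ``forces the direct construction'' is a non sequitur: it rules out only the naive product of a planar example with a measure on $\R^{d-2}$, whereas the paper's $\nu$ is itself a product measure, with the curve genuinely $d$-dimensional. If you pushed your bookkeeping through, the graded weights and digit-statistics you would need would essentially lead you back to this construction.
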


We note that doubling measures cannot charge even slightly more regular curves; indeed the authors' initial belief was
that a rectifiable curve could not carry any weight.
As discussed in \cite[\S I.8.6]{Stein} doubling measures give zero weight to any smooth hyper-surface.  The argument,
based on Lebesgue's density theorem (for $\nu$), adapts without difficulty to show that for any connected set~$\Gamma$,
$$
\nu\bigl( \{x\in\Gamma:\ \liminf_{r\to 0} r^{-1} \cH^1 (B(x,r)\cap \Gamma) <\infty \} \bigr) = 0.
$$
Therefore if $\Gamma$ is a rectifiable curve, then $\nu|_\Gamma$ must be singular to $\cH^1|_\Gamma$.
Similarly, no doubling measure can charge an Ahlfors regular curve.

We will prove Theorem~\ref{T:1} by explicitly constructing a measure and a rectifiable curve.  

\subsection*{Acknowledgements:}  
The question of whether a doubling measure can charge a rectifiable curve was posed to the third author by Mario Bonk. It seems to have been communicated to several people also by Saara Lehto and  Kevin Wildrick, and may have originated with the late Juha Heinonen.  
We are grateful to Jonas Azzam for a valuable critique of our
original approach to this problem.

While working on this paper we were supported in part by various NSF grants:
John Garnett, by  DMS-0758619 and DMS-0714945, 
Rowan Killip, by DMS-0401277 and DMS-0701085,
and 
Raanan Schul, by  DMS-0800837 (renamed to DMS-0965766) and  DMS-0502747.
Part of the work on this paper was done while the first author was a guest at the Centre de Recerca Matem\`atica in Barcelona.

\section{Proof}
\subsection{The Measure}
Our measure $\nu$ will be the $d$-fold product of a doubling measure $\mu$ on $\R$.  
The latter is constructed
by a simple iterative procedure that we will now describe.  It may be viewed as a variant of the classic Riesz product
construction and a `lift the middle' idea of Kahane (cf. \cite{Kahane}).  A very general form of this construction
appears in \cite{Wu}.

Let $h:\R\to\R$ be the $1$-periodic function
\begin{equation*}
  h(x) = \begin{cases} \,\,2 &: x\in [\tfrac13,\tfrac23)+\Z \\ - 1 & :\text{otherwise.}  \end{cases}
\end{equation*}
Then given $\delta\in(0,\tfrac13]$, we define $\mu$ as the weak-$*$ limit of
$$
d\mu_n := \prod_{j=0}^{n-1} \bigl[1+(1-3\delta)h(3^jx)\bigr]\; dx
$$
When $\delta=1/3$, $\mu$ is Lebesgue measure.

By viewing points $x\in\R$ in terms of their ternary (i.e., base 3) expansion, we may interpret $\mu$ as the result
of a sequence of independent trials.  More precisely, let $\cT_n$ denote the collection of triadic intervals of
size $3^{-n}$, that is,
\begin{equation}
\cT_n=\bigl\{\bigl[i 3^{-n}, (i+1)3^{-n}\bigr) :  i\in \Z \bigr\}.
\end{equation}
Then the measure of a triadic interval $I=[i 3^{-n}, (i+1)3^{-n})$ is related to that of its
parent $\hat I$, the unique interval in $\cT_{n-1}$ containing $I$, by
\begin{equation}
\mu(I)=
\begin{cases}
  (1-2\delta)\mu(\hat{I})   & : i\equiv 1 \mod 3 \\
  \delta\mu(\hat{I})        & : \text{otherwise.}
\end{cases}
\end{equation}
Coupled with the fact that $\mu([i,i+1))=1$ for $i\in\Z$, condition (2.2)  uniquely determines $\mu$.  In particular, we
note that if $j$, $n\geq 0$, and $0\leq i < 3^n$ are integers, then
\begin{equation}\label{e__triadic}
\mu\bigl( [j+i3^{-n},j+(i+1)3^{-n}) \bigr) = \delta^{n-k(i)} (1-2\delta)^{k(i)}
\end{equation}
where $k(i)$ is the number of times the digit $1$ appears in the ternary expansion of $i$.

We claim that $\mu$ is a doubling measure on $\R$. First let $I$ and $J$ be adjacent triadic intervals of equal size.
By \eqref{e__triadic} we have that $\mu(I)/\mu(J) \leq \frac{1-2\delta}{\delta}$. Several applications of this
shows that $\mu(I)/\mu(J) \leq C(\delta)$ for any pair $I$ and $J$ of adjacent intervals of equal size. Thus $\mu$ is
doubling.

Let $\nu$ be the product measure $\mu\times\cdots\times\mu$ on $\R^d$.  This is a doubling measure:
$\nu(I_1\times...\times I_d) \leq C(\delta)^d \, \nu(J_1\times...\times J_d)$ for any $d$ pairs of identical or adjacent
intervals $I_l,J_l$ that obey  $|I_l|=|J_l|$. Indeed, this holds even without the requirement
that $|I_l|=|I_{l'}|$ for $l\neq l'$.

\subsection{The Basic Building Blocks}

\begin{definition}
Given integer parameters $0\leq k \leq n$, we define $K(n,k)\subset [0,1)$ via
\begin{equation}\label{E:K defn}
K(n,k) = \cup \bigl\{ I \in \cT_n : I\subseteq[0,1) \text{ and }
\mu(I) \geq \delta^k (1-2\delta)^{n-k} \bigr\}.
\end{equation}
Equivalently, 
if $\delta<\frac13$, 
$K(n,k)$ is the set of those $x\in[0,1)$ whose ternary
expansion contains at most $k$ zeros or twos amongst
the first $n$ digits.
\end{definition}

\begin{lemma}
For $2\delta n \leq k \leq \tfrac23 n$ and $K=K(n,k)$ defined as in
\eqref{E:K defn}, we have
\begin{equation}\label{E:mu of K}
1 - \mu(K) \leq \exp\bigl\{ -2n\bigl(\tfrac{k}n -2\delta)^2 \bigr\}
\end{equation}
and
\begin{equation}\label{E:length of K}
|K| \leq 3^{-n} e^{k[ 1+ \log(\delta^{-1})]}
\end{equation}
\end{lemma}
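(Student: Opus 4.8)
The plan is to read $\mu$ restricted to $[0,1)$ as a probability measure and exploit the independence built into its construction. By \eqref{e__triadic} the mass of a level-$n$ triadic interval factors as a product of per-digit weights, so under $\mu$ the ternary digits of a point $x\in[0,1)$ are independent, each equal to $1$ with probability $1-2\delta$ and equal to $0$ or $2$ with probability $\delta$ apiece. Let $S_n(x)$ count how many of the first $n$ digits differ from $1$ (equivalently, the number of zeros or twos); since $S_n$ is constant on each interval of $\cT_n$, the description of $K=K(n,k)$ given after \eqref{E:K defn} says exactly that $x\in K$ iff $S_n(x)\le k$. Hence $1-\mu(K)=\mu(\{x\in[0,1):S_n(x)>k\})$, and $S_n$ is a sum of $n$ independent $\{0,1\}$-valued random variables with mean $2\delta n$. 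I would then obtain \eqref{E:mu of K} directly from Hoeffding's inequality with deviation $t=k-2\delta n\ge 0$ (this is where the hypothesis $k\ge 2\delta n$ enters): $\mu(S_n>k)\le\mu(S_n\ge k)\le\exp(-2t^2/n)=\exp\{-2n(\tfrac{k}{n}-2\delta)^2\}$.

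For \eqref{E:length of K} I would resist estimating the underlying binomial sum $\sum_{\ell\le k}\binom{n}{\ell}2^\ell$ term by term, since the crude ``largest term times number of terms'' bound costs a spurious polynomial factor; instead I would count the intervals in $K$ using total mass. As $K$ is a disjoint union of, say, $N$ level-$n$ triadic intervals, each of Lebesgue length $3^{-n}$ and each of $\mu$-mass at least $\delta^k(1-2\delta)^{n-k}$ by \eqref{E:K defn}, while their masses sum to $\mu(K)\le\mu([0,1))=1$, we get $N\,\delta^k(1-2\delta)^{n-k}\le 1$, that is, $N\le\delta^{-k}(1-2\delta)^{-(n-k)}$. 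Therefore $|K|=N\,3^{-n}\le 3^{-n}\delta^{-k}(1-2\delta)^{-(n-k)}$.

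It then remains to absorb the factor $(1-2\delta)^{-(n-k)}$ into $e^k$, since $e^{k[1+\log(\delta^{-1})]}=\delta^{-k}e^k$. Taking logarithms, I must verify $-(n-k)\log(1-2\delta)\le k$, and here I would invoke the elementary inequality $-\log(1-x)\le x/(1-x)$ for $0\le x<1$ with $x=2\delta$, reducing the claim to $(n-k)\,2\delta/(1-2\delta)\le k$; clearing denominators, this is equivalent to $2\delta n\le k$, precisely the standing hypothesis. The main obstacle, such as it is, lies in choosing the right route for \eqref{E:length of K}: the mass-based counting argument is clean and reaches the stated constant exactly, whereas a direct entropy or binomial-sum estimate requires extra care to match $(e/\delta)^k$ without slack. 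I note that the upper bound $k\le\tfrac23 n$ is not needed for either inequality as stated; both follow from $k\ge 2\delta n$ alone.
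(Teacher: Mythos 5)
Your proposal is correct, and for \eqref{E:length of K} it takes a genuinely different route from the paper. For \eqref{E:mu of K} the two arguments coincide in substance: the paper derives the Chernoff bound $e^{-nH(a,p)}$ by Cram\'er's method with $a=k/n$, $p=2\delta$, and then sharpens it via $H(a,p)\geq 2(a-p)^2$, which is exactly the Bernoulli case of the Hoeffding inequality you invoke directly; your reading of $1-\mu(K)$ as the upper tail of a Binomial$(n,2\delta)$ variable is the paper's own ``independent trials'' interpretation of the construction, justified by \eqref{e__triadic}. For \eqref{E:length of K}, however, the paper runs the same large-deviation machinery a second time, now under Lebesgue measure with $p=\tfrac13$ and $a=\tfrac{n-k}{n}$, and needs the entropy estimate $H(a,p)\geq\log(\tfrac1p)-(1-a)\bigl[\log(\tfrac{1-p}{p})+1+\log(\tfrac1{1-a})\bigr]$ plus the hypothesis $k\geq 2\delta n$ (to absorb $\log 2+\log(n/k)$ into $\log(\delta^{-1})$) to reach the stated constant. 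Your mass-counting argument replaces all of this: each level-$n$ interval of $K$ carries $\mu$-mass at least $\delta^k(1-2\delta)^{n-k}$ by \eqref{E:K defn} while the total mass is at most $1$, giving $|K|\leq 3^{-n}\delta^{-k}(1-2\delta)^{-(n-k)}$, and your final step via $-\log(1-x)\leq x/(1-x)$ reduces $(1-2\delta)^{-(n-k)}\leq e^k$ to precisely $2\delta n\leq k$ --- I checked this reduction and it is exact. What your route buys is a shorter proof that uses the measure $\mu$ itself rather than a second, unrelated binomial computation, and it lands on the constant $e^{k[1+\log(\delta^{-1})]}$ with no slack; what the paper's route buys is uniformity, since both inequalities fall out of a single Cram\'er estimate. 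Your closing remark that $k\leq\tfrac23 n$ is dispensable is accurate for your argument but worth flagging against the paper: there the constraint is genuinely used in the proof of \eqref{E:length of K}, since the tail bound with $p=\tfrac13$, $a=\tfrac{n-k}{n}$ requires $p\leq a$, i.e.\ $k\leq\tfrac23 n$; your counting argument shows this hypothesis is an artifact of that method.
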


\begin{proof}
Both inequalities rest on standard estimates for tail probabilities
for the binomial distribution.  
These are proved by the usual large deviation technique of Cram\'er
(cf. \cite[Theorem 1.3.13]{Stroock}):  
\begin{align*}
\sum_{m\geq an}^n \tbinom{n}{m} p^m (1-p)^{n-m}
&\leq \inf_{t\geq 0} \sum_{m=0}^n {\textstyle \binom{n}{m}} p^m (1-p)^{n-m} e^{(m-an)t} \\
    &= \inf_{t\geq 0}\,  \bigl[ e^{-at}( 1 - p + p\; e^{t} )\bigr]^n
\end{align*}
This infimum can be determined exactly and for $0<p\leq a < 1$ we obtain 
$$
\sum_{m\geq an}^n \tbinom{n}{m} p^m (1-p)^{n-m} \leq e^{-n H(a,p)}
$$
where
$$
 H(a,p) = a\log\bigl(\tfrac ap\bigr) + (1-a)\log\bigl(\tfrac{1-a}{1-p}\bigr).
$$

For \eqref{E:mu of K} we set $a=k/n$ and $p=2\delta$ and make use of the fact that
$$
H(a,p) \geq 2(a-p)^2.
$$
Indeed,  $H$ and $\partial_a H$ vanish at $a=p$, while $\partial_a^2 H = a^{-1}(1-a)^{-1} \geq 4$.

To obtain \eqref{E:length of K}, we set $p=\frac13$ and
$a=\frac{n-k}{n}$.  We simplified the answer by using
$$
H(a,p) \geq \log\bigl(\tfrac1p\bigr) - (1-a)\Bigl[
\log\bigl(\tfrac{1-p}{p}\bigr) + 1 +
\log\bigl(\tfrac{1}{1-a}\bigr)\Bigr],
$$
which amounts simply to $a\log(a)+1-a = - \int_a^1 \log(t)\,dt \geq 0$.
\end{proof}

\begin{remark}
Choosing $\delta < 2/9$ and $k=3\delta n$ and sending $n\to\infty$, we see by Lemma 2.2  that $\mu$ gives all its weight to
a set of Hausdorff dimension $O(\delta\log(\delta^{-1}))$.  The precise dimension of $\mu$ is not important
to us; however, we will exploit the fact that it can be made as small as we wish by sending $\delta\downarrow0$.
Indeed, the product measure $\nu$ cannot charge a set of Hausdorff dimension one (not to mention
a rectifiable curve) unless $\mu$ gives positive weight to a set of dimension $d^{-1}$ or smaller.
\end{remark}

By definition, $K(n,k)$ is a union of intervals from $\cT_n$.  Correspondingly, the $d$-fold Cartesian product $K(n,k)^d$
can be viewed as a union of triadic cubes $Q\subseteq \R^d$ (with side-length $3^{-n}$).  We denote this collection of
cubes by $\cK^d(n,k)$.  By~\eqref{E:length of K},
\begin{equation}\label{E:size of cK}
  \# \cK^d(n,k) \leq e^{kd[ 1+ \log(\delta^{-1})]}
\end{equation}
Similarly, we write $\cG(n,k)$ for the gaps in $K(n,k)$, that is, the bounded connected components of $\R\setminus K(n,k)$.
As each gap has a right end-point, \eqref{E:length of K} gives
\begin{equation}\label{E:number of gaps}
  \# \cG(n,k) \leq e^{k[ 1+ \log(\delta^{-1})]}.
\end{equation}
Note also that $|\cup \cG(n,k)| \leq 1$, as $K(n,k)\subseteq [0,1)$.

We now define a curve $\Gamma(n,k)\subset \R^d$ which visits each
cube $Q\in \cK^d(n,k)$.  Actually, we merely construct a connected family of line segments $\Gamma(n,k)$
that do this, and bound its total length.  As noted in the introduction, all segments in
$\Gamma(n,k)$ can be traversed by a single curve of comparable total length.

The family $\Gamma(n,k)$ is the union of skeletons of rectangular boxes, where we define the \emph{skeleton} of a box is
$$
\Sk(I_1\times\cdots\times I_d) = \bigcup_{j=1}^d \ \partial I_1 \times \cdots\times \partial I_{j-1} \times I_j
    \times \partial I_{j+1} \times \cdots \times \partial I_d.
$$
Thus $\Sk(Q)$ is the union  of the edges --- as opposed to vertices, faces, 3-faces, etc. --- of the box $Q$.  With this
notation,
$$
\Gamma(n,k) =  \bigcup_{Q\in\cK^d(n,k)} \Sk(Q)  \quad\cup
    \bigcup_{I_1,\ldots,I_d \in \cG(n,k)} \Sk(I_1\times\cdots\times I_d).
$$
Note that $\Gamma(n,k)$ is connected.
We now estimate the total length of this set.

\begin{lemma}[The length of the $\Gamma(n,k)$]
Assuming $2\delta n \leq k \leq \tfrac23 n$,
\begin{equation}\label{E:length Gamma n k}
\cH^1(\Gamma(n,k))  \leq d 2^{d} e^{d k [ 1 + \log(\delta^{-1})]}.
\end{equation}
\end{lemma}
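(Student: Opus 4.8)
The plan is to compute the one-dimensional Hausdorff measure of a single skeleton explicitly and then sum over the two families of boxes appearing in the definition of $\Gamma(n,k)$, feeding in the cardinality bounds \eqref{E:size of cK} and \eqref{E:number of gaps} that are already available. First I would unwind the definition of $\Sk$: for a box $I_1\times\cdots\times I_d$, each factor $\partial I_i$ is a two-point set, so the $j$-th term $\partial I_1\times\cdots\times\partial I_{j-1}\times I_j\times\partial I_{j+1}\times\cdots\times\partial I_d$ is a disjoint union of $2^{d-1}$ segments, each parallel to the $j$-th coordinate axis and of length $|I_j|$. Summing over $j$ gives the clean formula
$$
\cH^1\bigl(\Sk(I_1\times\cdots\times I_d)\bigr) = 2^{d-1}\sum_{j=1}^d |I_j|,
$$
which is the only geometric input needed.

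Next I would handle the two pieces separately. For the cubes, every $Q\in\cK^d(n,k)$ has side-length $3^{-n}$, so the formula above gives $\cH^1(\Sk(Q)) = d\,2^{d-1}3^{-n}\leq d\,2^{d-1}$; multiplying by the count \eqref{E:size of cK} bounds the total contribution of the cubes by $d\,2^{d-1} e^{dk[1+\log(\delta^{-1})]}$. For the gaps I would observe that the sum over tuples $(I_1,\dots,I_d)\in\cG(n,k)^d$ of $\sum_j|I_j|$ factorizes: fixing the index $j$ that carries the length, the sum over the remaining $d-1$ coordinates contributes a factor $(\#\cG(n,k))^{d-1}$, while the $j$-th coordinate contributes $\sum_{I\in\cG(n,k)}|I| = |\cup\,\cG(n,k)| \leq 1$. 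Combined with \eqref{E:number of gaps}, this bounds the gap contribution by $d\,2^{d-1} e^{(d-1)k[1+\log(\delta^{-1})]} \leq d\,2^{d-1} e^{dk[1+\log(\delta^{-1})]}$.

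Adding the two contributions then yields $d\,2^{d} e^{dk[1+\log(\delta^{-1})]}$, exactly the claimed bound \eqref{E:length Gamma n k}. I do not expect a genuine obstacle here; the computation is essentially bookkeeping. The one place that requires care is the factorization of the gaps sum, where it is essential to spend the estimate $|\cup\,\cG(n,k)|\leq1$ on the single length-bearing coordinate and to bound each of the other $d-1$ coordinates only by the \emph{number} of gaps --- mixing these up would cost an extra power of the (small) total gap length or, conversely, overcount. The hypothesis $2\delta n \leq k \leq \tfrac23 n$ plays no direct role in this argument beyond guaranteeing the validity of the cardinality bounds inherited from Lemma 2.2.
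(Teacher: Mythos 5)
Your proof is correct and follows essentially the same route as the paper's: both decompose $\cH^1(\Gamma(n,k))$ into the cube-skeleton and gap-box-skeleton contributions, bound the former via \eqref{E:size of cK}, and bound the latter by factorizing into $d\,2^{d-1}\bigl[\#\,\cG(n,k)\bigr]^{d-1}\sum_{I\in\cG(n,k)}|I|$ with $\sum_{I\in\cG(n,k)}|I|\leq 1$ and \eqref{E:number of gaps}. Your only (harmless) deviations are making the skeleton-length formula $\cH^1(\Sk(I_1\times\cdots\times I_d))=2^{d-1}\sum_j|I_j|$ explicit and discarding the factor $3^{-n}$ slightly earlier than the paper does.
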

\begin{proof}
By \eqref{E:size of cK} and \eqref{E:number of gaps},
\begin{align*}
\cH^1(\Gamma(n,k)) &\leq \sum_{Q\in\cK^d(n,k)} \cH^1 \bigl(\Sk(Q)\bigr)
    + \!\!\!\!\sum_{I_1,\ldots,I_d \in \cG(n,k)}\!\!\!\!\cH^1\bigl(\Sk(I_1\times\cdots\times I_d)\bigr) \\[2mm]
&=  d2^{d-1} 3^{-n} \bigl[\#\,\cK^d(n,k)\bigr] + \,d 2^{d-1} \bigl[\#\,\cG(n,k)\bigr]^{d-1} 
    \!\!\!\sum_{I\in\cG(n,k)}\!\!\! |I| \\
&\leq d 2^{d-1} 3^{-n} e^{kd[ 1+ \log(\delta^{-1})]} + d 2^{d-1} e^{k(d-1)[ 1+ \log(\delta^{-1})]}
\end{align*}
which easily yields \eqref{E:length Gamma n k}.
\end{proof}

\subsection{The Curve}
Using $\Gamma(n,k)$ as a building-block, we now explain the iterative construction of the full curve~$\Gamma$.
It depends upon a collection of parameters $\{n_j,k_j\}_{j=1}^\infty$.
The guiding principle is to replace each cube in $\cK^d(n_j,k_j)$ by rescaled/translated copies of
$\cK^d(n_{j+1},k_{j+1})$ and $\Gamma(n_{j+1},k_{j+1})$.
See Figure \ref{f__K_figure}.

To this end, we define a version $\Gamma_Q(n,k)$ of $\Gamma(n,k)$ adapted to any cube~$Q$:
$$
 \Gamma_Q(n,k) = A_Q\bigl(  \Gamma(n,k) \bigr)
$$
where $A_Q$ is the affine transformation that maps $[0,1)^d$ to $Q$.  Similarly, we inductively define
$$
\cK_0 =\bigl\{[0,1)^d\bigr\} \ \ \text{and}\ \ \cK_{l}
    = \bigcup_{Q\in\cK_{l-1}} \bigl\{ A_Q(Q') : Q'\in \cK^d(n_{l},k_{l}) \bigr\}  \ \ \text{for $l\geq 1$.}
$$
Thus $\cK_{l}$ is the collection of cubes remaining after the $l^{\rm \,th}$ iteration in the construction of~$\Gamma$.  Subsequent iterations
will not modify $\Gamma$ outside their union, $$K_{l} = \cup\{Q:Q\in\cK_{l}\}.$$  We note that the cubes in $\cK_{l}$ have
disjoint interiors, and that by \eqref{E:size of cK},
\begin{equation}\label{E:size K_l}
\begin{aligned}
\# \cK_{l}  &\leq [\# \cK_{l-1}] \exp\bigl\{k_l d[ 1+ \log(\delta^{-1})]\bigr\} \\
    &\leq \exp\bigl\{ (k_1+\cdots+k_l) d[ 1+ \log(\delta^{-1})] \bigr\}
\end{aligned}
\end{equation}

\begin{figure}[t]
\scalebox{0.3}{\includegraphics*{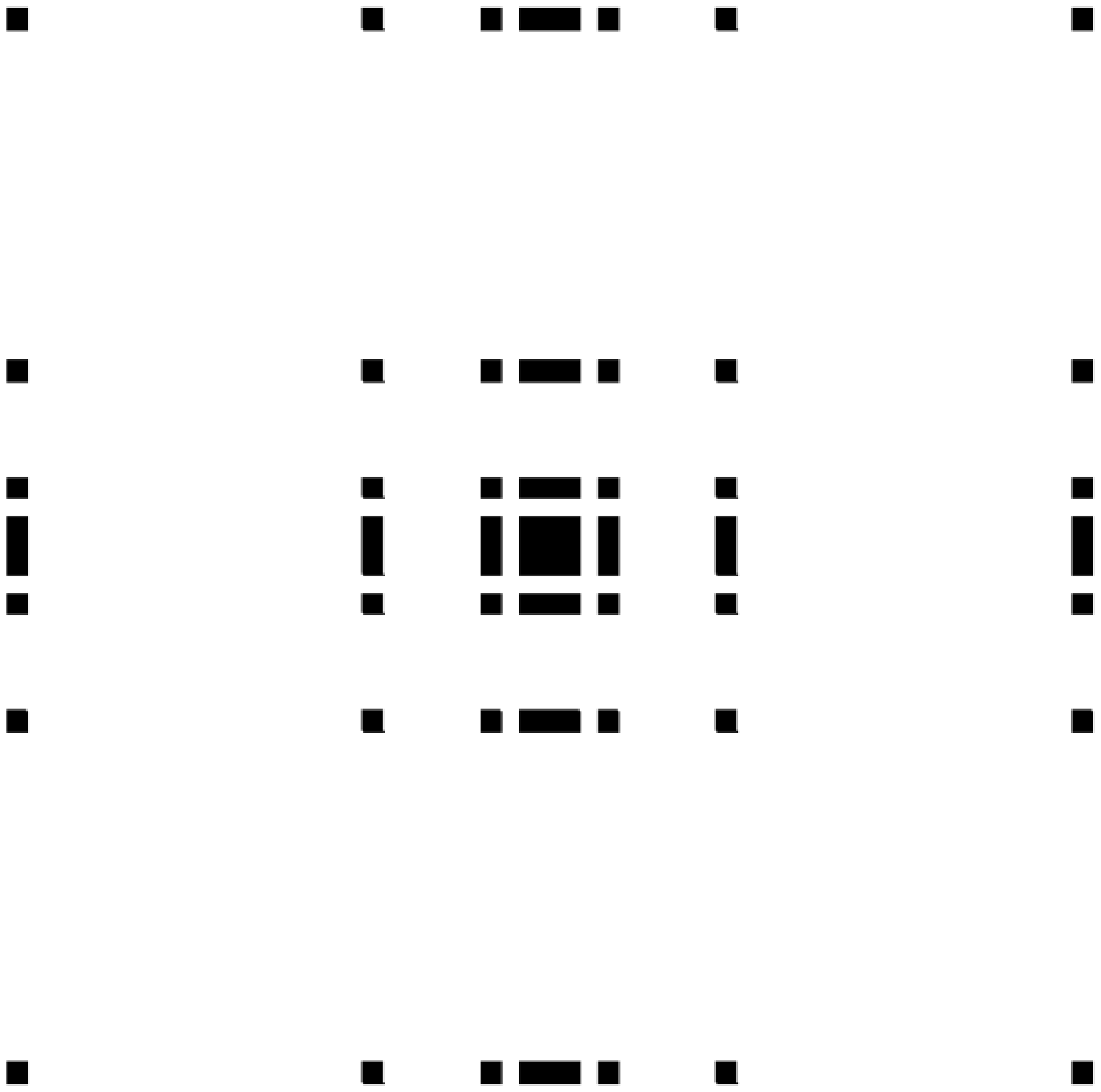}}
\hspace{3cm}
\scalebox{0.3}{\includegraphics*{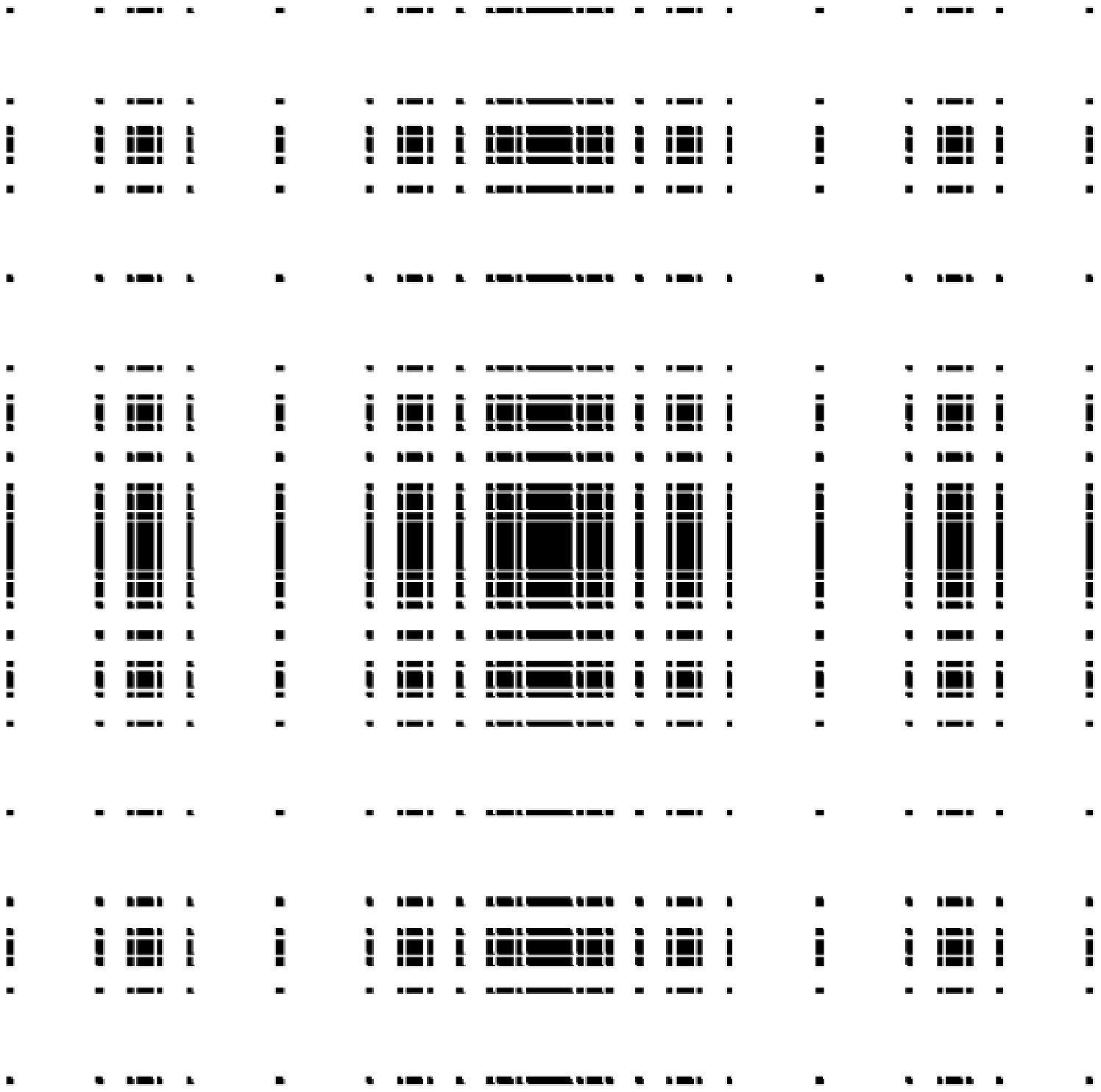}}
\label{f__K_figure}
\setlength{\unitlength}{1mm}
\begin{picture}(0,0)(+62,16)
\put(5,37){$\left\{ \vrule width 0mm height 25mm depth 0mm \right.$}
\put(3,38.5){\rnode{tail}}
\put(-33.5,48){\rnode{head}}
\put(-33.0,45){\tiny $Q$}
\put(-15,47){ $A_Q$}
\nccurve[linecolor=black,linewidth=0.5pt,angleA=60,angleB=170]{<-}{head}{tail}
\psline[linewidth=.1mm, linecolor=red](-48.6mm,17.5mm)(-48.6mm,31.1mm)
\psline[linewidth=.1mm, linecolor=red](-44.4mm,17.5mm)(-44.4mm,31.1mm)
\psline[linewidth=.1mm, linecolor=red](-48.6mm,31.1mm)(-44.4mm,31.1mm)
\psline[linewidth=.1mm, linecolor=red](-48.6mm,17.5mm)(-44.4mm,17.5mm)
\put(-43,23){\small   $Sk(I_1\times I_2)$}
\end{picture}
\caption{Each cube $Q\in\cK^2(n_1,k_1)$  will be replaced by the affine image of $\cK^2(n_2,k_2)$.
We also illustrate one of the many rectangle-skeletons in $\Gamma(n_1,k_1)$.}
\end{figure}

We define
\begin{align}
\Gamma= \Sk\bigl([0,1)^d \bigr) \ \cup\  \bigcup_{l=1}^\infty \ \ \bigcup_{Q\in\cK_{l-1}}\ \Gamma_Q(n_l,k_l)
\ \cup \ \bigcap_{l =1}^\infty\   K_{l}.\end{align}
Note that $\Gamma$ is connected.
The proof of Theorem~\ref{T:1} now reduces to the following two propositions,
which  show that $\cH^1(\Gamma)<\infty$ and $\nu(\Gamma)>0$ for a certain explicit choice of parameters.

\begin{proposition}[The length of $\Gamma$]\label{L:length of Gamma}
Let $\delta>0$ and $n_1\in\Z$ be parameters so that
\begin{equation}\label{E:epsilon-delta}
18 d[ \delta + \delta \log(\delta^{-1})] \leq \log(3)
\end{equation}
and $k_1:=3\delta n_1\geq 1$ is an integer.  If $\Gamma$ is the curve defined above with parameters
$n_l=ln_1$ and $k_l=lk_1$, then
\begin{align}\label{E:length Gamma}
\cH^1(\Gamma) &\leq 3d2^{d}e^{3dn_1[\delta+\delta\log(\delta^{-1})]}
\end{align}
\end{proposition}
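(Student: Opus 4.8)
The plan is to bound $\cH^1(\Gamma)$ term-by-term using subadditivity, splitting $\Gamma$ into the initial skeleton $\Sk([0,1)^d)$, the building blocks $\Gamma_Q(n_l,k_l)$ summed over generations $l\geq 1$, and the residual set $\bigcap_l K_l$. First I would record the geometry of the $l$-th generation. Iterating the definition of $\cK_l$ and using $n_j=jn_1$, every cube $Q\in\cK_{l-1}$ has side-length $3^{-(n_1+\cdots+n_{l-1})}=3^{-n_1(l-1)l/2}$; since $A_Q$ is a similarity of this ratio and $\Gamma_Q(n_l,k_l)=A_Q(\Gamma(n_l,k_l))$, the length bound \eqref{E:length Gamma n k} gives $\cH^1(\Gamma_Q(n_l,k_l))\le 3^{-n_1(l-1)l/2}\,d2^d e^{dk_l[1+\log(\delta^{-1})]}$. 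Here one first checks the hypothesis $2\delta n_l\le k_l\le\tfrac23 n_l$ of that lemma, which is immediate since $k_l=lk_1=3\delta(ln_1)=3\delta n_l$ and \eqref{E:epsilon-delta} forces $\delta<\tfrac29$. Multiplying by the cardinality bound \eqref{E:size K_l} for $\#\cK_{l-1}$ and abbreviating $\beta:=3d[\delta+\delta\log(\delta^{-1})]$ (so that $k_1d[1+\log(\delta^{-1})]=n_1\beta$ and $dk_l[1+\log(\delta^{-1})]=ln_1\beta$), the whole $l$-th generation contributes at most
$$d2^d\exp\Bigl\{n_1\Bigl[\tfrac{(l-1)l}{2}(\beta-\log 3)+l\beta\Bigr]\Bigr\}.$$

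Next I would isolate the leading generation and show the tail is summable. Writing $E_l:=\tfrac{(l-1)l}{2}(\beta-\log 3)+l\beta$ for the bracketed exponent, one has $E_1=\beta$ and, for $l\ge 1$, $E_l-\beta=\tfrac{l-1}{2}\bigl[(l+2)\beta-l\log 3\bigr]$. The key observation is that \eqref{E:epsilon-delta} is exactly the statement $\beta\le\tfrac16\log 3$, so $(l+2)\beta-l\log 3\le\tfrac{2-5l}{6}\log 3<0$ for every $l\ge 1$; hence $\exp\{n_1E_l\}\le e^{n_1\beta}\,3^{-n_1(l-1)(5l-2)/12}$. Since $n_1\ge 1$, the factors $3^{-n_1(l-1)(5l-2)/12}$ decay super-geometrically in $l$ and sum to at most $2$. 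Combined with $\cH^1(\Sk([0,1)^d))=d2^{d-1}\le\tfrac12 d2^de^{n_1\beta}$, the skeleton and the building blocks together contribute at most $\tfrac52 d2^de^{n_1\beta}\le 3d2^de^{n_1\beta}$, which is the asserted bound \eqref{E:length Gamma} (the constant $3$ is not sharp).

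It then remains to show the residual set carries no length. Since $\bigcap_l K_l\subseteq K_l$ and $K_l$ is covered by the $\#\cK_l$ cubes of $\cK_l$, each of diameter $\sqrt d\,3^{-n_1 l(l+1)/2}$, the bound \eqref{E:size K_l} gives $\#\cK_l\le\exp\{n_1\beta\,l(l+1)/2\}$, so the one-dimensional Hausdorff premeasure of $\bigcap_l K_l$ at scale $3^{-n_1 l(l+1)/2}$ is at most $\sqrt d\,\exp\{n_1\tfrac{l(l+1)}{2}(\beta-\log 3)\}$. As $\beta<\log 3$ this tends to $0$ when $l\to\infty$, so $\cH^1(\bigcap_l K_l)=0$ and this piece adds nothing.

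The arithmetic with the exponents is routine; the one step that really matters --- and which I would set up most carefully --- is the competition in $E_l$ between the quadratic gain $\tfrac{(l-1)l}{2}\log 3$ coming from the shrinking cubes and the at-most-linear-in-$l$ loss from the growth of both $\#\cK_{l-1}$ and $\cH^1(\Gamma(n_l,k_l))$. The whole estimate succeeds precisely because the calibration $\beta\le\tfrac16\log 3$ encoded in \eqref{E:epsilon-delta} makes the scale term dominate, so that the generations form a (super-)convergent geometric-type series whose sum is a constant independent of $n_1$.
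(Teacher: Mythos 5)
Your proposal is correct and takes essentially the same route as the paper: bound $\cH^1(\Gamma)$ by countable subadditivity using Lemma 2.4 rescaled to each $Q\in\cK_{l-1}$ together with the cube count \eqref{E:size K_l}, observe that \eqref{E:epsilon-delta} (your $\beta\leq\tfrac16\log 3$) makes the quadratic-in-$l$ gain from shrinking scales dominate so the generations sum to $O(e^{n_1\beta})$, and kill the residual set $\bigcap_l K_l$ by a count-times-diameter covering at generation $l$. Your tail exponent $-(l-1)(5l-2)/12$ is a slightly sharper version of the paper's $-\tfrac14 l(l-1)$, and your verification of the hypothesis $2\delta n_l\leq k_l\leq\tfrac23 n_l$ is a detail the paper leaves implicit; otherwise the two arguments coincide.
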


\begin{proof}
By (2.7) we have
$$\cH^1\biggl(\bigcap_{l =1}^{\infty} K_l\biggr) \leq \prod_{l =1}^{\infty} \bigl(3^{-n_l} e^{dk_l(1 + \log(\delta^{-1})}\bigr) =0.$$
 Hence by \eqref{E:length Gamma n k} and~\eqref{E:size K_l},
\begin{align*}
\cH^1(\Gamma) &\leq d2^{d-1} + \sum_{l=1}^\infty
    \sum_{Q\in\cK_{l-1}} d 2^{d} 3^{-(n_1+\cdots+n_{l-1})} \exp\bigl\{ d k_l [ 1 + \log(\delta^{-1})]\} \\
&\leq d2^{d} \bigg[ 1 + \sum_{l=1}^\infty 3^{-(n_1+\cdots+n_{l-1})}
    \exp\bigl\{ (k_1+\cdots+k_l) d[ 1+ \log(\delta^{-1})] \bigr\} \biggr].
\end{align*}
Inserting the values of our parameters and performing a few elementary manipulations, we find
\begin{align*}
\cH^1(\Go) &\leq d2^{d-1} e^{3dn_1[\delta+\delta\log(\delta^{-1})]}
    \biggl[ 2 + \sum_{l=2}^\infty \exp\bigl\{ -\tfrac14 l(l-1)\log(3)n_1 \bigr\} \biggr]
\end{align*}
which yields \eqref{E:length Gamma} with a few more manipulations.
\end{proof}

\begin{proposition}[The measure of $\Gamma$]\label{L:measure of Gamma}
Let $\delta$ and $\{n_l,k_l\}_{l=1}^\infty$ be as in Proposition~{\rm\ref{L:length of Gamma}}.
Then
\begin{equation}
\nu(\Gamma) \geq \exp\biggl\{ - \frac{d e^{-2\delta^2 n_1}}{(1-e^{-2\delta^2 n_1})^2} \biggr\}.
\end{equation}
\end{proposition}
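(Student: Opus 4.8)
The plan is to throw away everything in $\Gamma$ except the limiting Cantor-type set $\bigcap_{l=1}^\infty K_l$, which already carries all the mass we need. Since $\bigcap_{l}K_l\subseteq\Gamma$, it suffices to bound $\nu\bigl(\bigcap_l K_l\bigr)$ from below. The refinement in the construction makes the $K_l$ nested and decreasing, and all of them lie in $[0,1)^d$, where $\nu$ is finite ($\nu([0,1)^d)=1$); so by continuity from above $\nu\bigl(\bigcap_l K_l\bigr)=\lim_{l\to\infty}\nu(K_l)$, and the task reduces to estimating $\nu(K_l)$.

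First I would record the self-similarity of $\mu$, and hence of $\nu$. Formula~\eqref{e__triadic} shows that the multiplicative structure of $\mu$ is preserved under triadic rescaling: for a triadic interval $I$, the normalized restriction $\mu|_I/\mu(I)$, pulled back to $[0,1)$ by the affine map carrying $[0,1)$ onto $I$, is again $\mu$. Taking $d$-fold products, the same holds for $\nu$ on any triadic cube $Q$, so that pushing $\nu|_Q/\nu(Q)$ back by $A_Q$ recovers $\nu$. Consequently, passing from $\cK_{l-1}$ to $\cK_l$ replaces each cube $Q\in\cK_{l-1}$ by $A_Q\bigl(K(n_l,k_l)^d\bigr)$, whose share of $\nu(Q)$ is exactly $\nu\bigl(K(n_l,k_l)^d\bigr)=\mu\bigl(K(n_l,k_l)\bigr)^d$. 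Summing over $Q\in\cK_{l-1}$ gives the recursion $\nu(K_l)=\mu(K(n_l,k_l))^d\,\nu(K_{l-1})$, and hence
$$
\nu\Bigl(\bigcap_{l=1}^\infty K_l\Bigr)=\prod_{l=1}^\infty \mu\bigl(K(n_l,k_l)\bigr)^d.
$$

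Next I would feed in the tail estimate~\eqref{E:mu of K}. With the chosen parameters $n_l=ln_1$ and $k_l=lk_1=3\delta l n_1$ one has $k_l/n_l=3\delta$, and \eqref{E:epsilon-delta} forces $\delta$ small enough (in particular $\delta\le 2/9$) that the hypothesis $2\delta n_l\le k_l\le\tfrac23 n_l$ of the Lemma holds for every $l$. Then \eqref{E:mu of K} gives $1-\mu(K(n_l,k_l))\le \exp\{-2n_l(3\delta-2\delta)^2\}=e^{-2\delta^2 n_1 l}$, so that, writing $q:=e^{-2\delta^2 n_1}$,
$$
\nu\Bigl(\bigcap_{l=1}^\infty K_l\Bigr)\geq \Bigl[\prod_{l=1}^\infty\bigl(1-q^l\bigr)\Bigr]^d.
$$

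Finally I would dispatch the infinite product by an elementary calculation. Using $\log(1-x)\ge -x/(1-x)$ for $0\le x<1$, together with $1-q^l\ge 1-q$ and the geometric series, one gets $\sum_{l\ge1}\log(1-q^l)\ge -\tfrac{1}{1-q}\sum_{l\ge1}q^l=-q/(1-q)^2$; exponentiating and raising to the $d$-th power yields precisely the claimed bound. The only genuinely delicate point is the exact product formula for $\nu(K_l)$, which rests on the self-similarity of $\nu$ under triadic rescaling, so I would take care to check that each $A_Q$ is a triadic affine map and that the multiplicative law~\eqref{e__triadic} indeed propagates to every scale; the remaining steps are bookkeeping and standard series estimates.
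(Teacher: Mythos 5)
Your proof is correct and follows essentially the same route as the paper: both arguments lower-bound $\nu(\Gamma)$ by the $\nu$-measure of the surviving cubes generation by generation, apply the tail estimate \eqref{E:mu of K} with $k_l/n_l=3\delta$ (checking $\delta\le 2/9$ from \eqref{E:epsilon-delta}) to obtain the product $\prod_{l}\bigl[1-e^{-2\delta^2 n_1 l}\bigr]^d$, and then finish with an elementary series bound giving $\exp\{-dZ/(1-Z)^2\}$. Your variations --- using $\bigcap_l K_l\subseteq\Gamma$ with continuity from above in place of the paper's dominated-convergence step, making the triadic self-similarity of $\nu$ explicit so as to get an exact product formula rather than the paper's inductive inequality, and estimating $\sum_l \log(1-q^l)$ via $\log(1-x)\ge -x/(1-x)$ rather than the double series $\sum_{j,k}Z^{jk}$ --- are cosmetic refinements of the same argument.
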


\begin{proof}
By the dominated convergence theorem,
\begin{align*}
\nu(\Gamma) \geq \lim_{l\to\infty} \nu\bigl(\overline{\cup\,\{Q:Q\in\cK_{l}\}}\bigr)
    \geq \lim_{l\to\infty} \nu\bigl(\cup\,\{Q:Q\in\cK_{l}\}\bigr).
\end{align*}
(In fact,  since doubling measures cannot charge straight lines, equality actually holds above,  but we will not need this.)
Since the cubes in $\cK_l$ have disjoint interiors, (2.5)
 and induction give us
\begin{align*}
  \nu\bigl(\bigcup\,\{Q:Q\in\cK_{l}\}\bigr) \geq \bigl[1 - e^{-2\delta^2 n_l} \bigr]^d 
\sum_{Q\in \cK_{l-1}} \nu(Q) \geq \prod_{j=1}^l \bigl[ 1 - e^{-2\delta^2 n_j} \bigr]^d.
\end{align*}
Inserting the values of our parameters and performing a few elementary
 manipulations, we conclude that 
\begin{align*}
\nu(\Gamma) &\geq \exp\biggl\{ d \sum_{j=1}^\infty \log(1-Z^j) \biggr\}
   \geq \exp\biggl\{ - d \sum_{j,k=1}^\infty Z^{jk} \biggr\}
   \geq \exp\bigl\{ - \tfrac{dZ} {(1-Z)^2}\bigr\}
\end{align*}
where $Z:=\exp\{-2\delta^2 n_1\}$.  That proves (2.14). 
\end{proof}

In closing, we note that the curve $\Gamma$ can be made to capture an arbitrarily large proportion of the $\nu$-mass of the unit cube; one merely chooses the parameter $n_1$ large (with $\delta$ fixed).

%
%


\end{document}